\documentclass[12pt]{article}
\usepackage{ae} 
\usepackage{aecompl} 

\usepackage{amsmath}
\usepackage{amsthm}
\usepackage{amsfonts}
\usepackage{amssymb}
\usepackage{stmaryrd}
\usepackage{mathtools}
\usepackage{bm}
\usepackage{bbm}
\usepackage{tikz}
\usetikzlibrary{calc}
\usetikzlibrary{arrows}

\usepackage[%
  pdftex,
  plainpages=false,
  pdfpagelabels,
  colorlinks,
  citecolor=black,
  linkcolor=black,
  urlcolor=black,
  filecolor=black,
  bookmarksopen=false
]{hyperref} 
\usepackage[all]{hypcap} 



\def\({\left(}
\def\){\right)}
\def\[{\left[}
\def\]{\right]}
\def\<{\left\langle}
\def\>{\right\rangle}
\def\lv{\left\lvert}

\def\rv{\right\rvert}

\def\llb{\left\llbracket}
\def\rrb{\right\rrbracket}

\def\iff{\Longleftrightarrow}


\let\geq\geqslant
\let\leq\leqslant
\let\:\colon

\let\epsilon\varepsilon
\let\oldphi\phi
\let\phi\varphi



\def\NN{\ensuremath{\mathbb{N}}}

\def\RR{\ensuremath{\mathbb{R}}}

\def\One{\ensuremath{\mathbbm{1}}}

\def\cA{\ensuremath{\mathcal{A}}}

\def\cF{\ensuremath{\mathcal{F}}}







\let\phi\oldphi

\DeclareMathOperator{\Tr}{Tr}
\DeclareMathOperator{\Aut}{Aut}
\newcommand{\df}{\stackrel{\rm def}{=}}
\newcommand{\edge}[2]{\langle #1,#2 \rangle}
\DeclareMathOperator{\Hom}{Hom}
\newcommand{\of}[1]{\left( #1 \right)}
\newcommand{\expect}[1]{ {\mathbf E}\! \left[ #1 \right] }
\newcommand{\function}[2]{\colon #1 \longrightarrow #2}
\newcommand{\set}[2]{\left\{\hspace{0.3ex} #1 \middle| \hspace{0.2ex} #2 \right\}}
\newcommand{\eval}[2]{\llbracket #1 \rrbracket_{#2}}
\newcommand{\phirdm}{\phi_{\text{qr}}}
\newcommand{\as}{\;\;\text{a.s.}}
\renewcommand{\One}{1}

\newtheoremstyle{plain}
  {\medskipamount}
  {\smallskipamount}
  {\slshape}
  {0pt}
  {\bfseries}
  {.}
  { }
  {\thmname{#1}\thmnumber{ #2}{\normalfont\thmnote{ (#3)}}}

\theoremstyle{plain}

\newtheorem{theorem}{Theorem}[section]

\newtheorem*{remark*}{Remark}
\newtheorem*{example*}{Example}

\title{On the Density of Transitive Tournaments}
\author{Leonardo Nagami Coregliano\thanks{
Instituto de Matem\'atica e Estat\'istica, Universidade de S\~ao Paulo, \nolinkurl{lenacore@ime.usp.br}. Work done while visiting
University of Chicago, supported by Funda\c c\~ao de Amparo \`a Pesquisa do Estado de S\~ao Paulo (FAPESP) under grants no.~2013/23720-9 and~2014/15134-5.}\and Alexander A.~Razborov\thanks{University of Chicago, \nolinkurl{razborov@cs.uchicago.edu}. Part of this work was done while the
author was at
Steklov Mathematical Institute, supported by the Russian Foundation
for Basic Research, and at Toyota Technological Institute, Chicago.}}
\begin{document}
\maketitle

\begin{abstract}
We prove that for every fixed~$k$, the number of occurrences of the transitive tournament~$\Tr_k$ of order~$k$ in a tournament~$T_n$ on~$n$ vertices is asymptotically minimized when~$T_n$ is random. In the opposite direction, we show that any sequence of tournaments~$\{T_n\}$ achieving this minimum for any fixed~$k\geq 4$ is necessarily quasi-random. We present several other characterizations of quasi-random
tournaments nicely complementing previously known results and relatively easily following from our proof techniques.
\end{abstract}

For a fixed combinatorial object~$M$, e.g.~a graph, a hypergraph or a tournament, the number of its occurrences in a large random object~$N$ of the same signature is always easy to compute asymptotically. Reverse problems have become the subject of active research in combinatorics.

In the first direction along these lines, we are interested in the questions of the kind ``for which templates~$M$ the number of occurrences is asymptotically optimized by the random object'' or, in other terms, when the latter forms an extremal configuration for the corresponding extremal problem. Perhaps, one of the most prominent settings in which this question has been studied systematically is that of~\emph{$k$-common graphs}~\cite{JST}. In this setting, $M$ is an ordinary graph, $N$ is a~$k$-edge-coloring of a complete graph, and an ``occurrence'' means a monochromatic copy. The~$k$-common graphs are those for which the random edge coloring asymptotically minimizes the number of occurrences, and they were studied in many papers, see e.g.~\cite{Erd5,BuR,Sid2,JST,Tho,CuY,wheel}.

The second direction focusses on the complementary question: ``When all extremal configurations for this extremal problem are necessarily quasi-random?'' We do not attempt to review here the theory of quasi-randomness: beginning from the seminal papers by Thomason~\cite{Tho2} and
Chung, Graham, Wilson~\cite{CGW}, it has developed into a vast field surveyed for example in~\cite{KrS}. But the main thrust of this theory is that many a priori different properties eventually lead to the same class of combinatorial objects, hereafter called {\em quasi-random}. Perhaps, the most intuitive of these properties says that quasi-random objects are precisely those~$N$ in which {\em all} fixed templates~$M$ have the right density of occurrences, understood asymptotically.

\bigskip

In this note we study questions of both kinds for tournaments. By analogy with graphs, it might be tempting to call a tournament~$T$ {\em common} if for any increasing sequence~$\{T_n\}$ of tournaments we have
\begin{equation}\label{naive}
  \liminf_{n\to\infty} p(T,T_n)\geq \frac{k!}{|\Aut(T)|\cdot 2^{k\choose 2}},
\end{equation}
where~$k\df |V(T)|$, $p(T,T_n)$ is the density with which~$T$ appears in~$T_n$ as an unlabelled sub-tournament and~$\Aut(T)$ denotes the group
of automorphisms of~$T$. In the language of flag algebras~\cite{flag} it can be stated more cleanly and concisely as
\begin{equation} \label{flagmatic}
\forall \phi\in\Hom^+(\mathcal A^0[T_{\text{Tournaments}}], \mathbb R) \of{\phi(T) \geq \frac{k!}{|\Aut(T)|\cdot 2^{k\choose 2}}},
\end{equation}
and in what follows we will mostly use this language. The reader who feels uncomfortable with limit objects should have little difficulties translating our statements to the finite world by analogy with~\eqref{naive}.

Upon a moment's reflection it becomes clear that only transitive tournaments may possibly be common since all others occur with zero density in the increasing sequence~$\{\Tr_n\}$ of transitive tournaments. This gives rise to the natural question if the converse is true, that is if
\begin{equation}\label{main_inequality}
\phi(\Tr_k) \geq \frac{k!}{2^{k\choose 2}},
\end{equation}
where again~$\phi$ is any algebra homomorphism from~$\Hom^+(\mathcal A^0[T_{\text{Tournaments}}], \mathbb R)$. To the best of our knowledge, this question was not explicitly asked before. The cases~$k=0,1,2$ are trivial,
and the case~$k=3$ was implicitly answered by Chung and~Graham in~\cite[Fact~1]{ChGr2}.
With a bit of effort, the bound~\eqref{main_inequality} for~$k=4$ can be extracted from Griffith's paper~\cite[Proposition~3.1~(i)]{Grif}, but his proof does not seem to be generalizable to larger values of~$k$.

Our first main result confirms~\eqref{main_inequality} for all values of~$k$. In other words, transitive tournaments are common and vice versa. This result also has an interesting ``sharp threshold'' flavor to it. Namely, if we take an arbitrarily large transitive tournament and flip even a {\em single} arc in
it\footnote{except for the arcs that when flipped yield transitive tournaments again}, the extremal (minimizing) configuration jumps from the random tournament to the opposite side of the spectrum, i.e.~transitive tournaments.

\medskip

The theory of quasi-random tournaments was inaugurated by Chung and~Graham in~\cite{ChGr2} and followed up in~\cite{KaSa,Grif}. A significant portion of all these papers is devoted to the question formulated above in more general context: for which tournaments~$T$ the equality in~\eqref{flagmatic} implies that~$\phi$ is quasi-random?
We show this for any transitive tournament~$\Tr_k$ with\footnote{The case~$k=4$ can be again extracted from~\cite{Grif}.}~$k\geq 5$.

Many characterizations in~\cite{ChGr2,Grif} are given in terms of ``flag concentration''. Given a tournament~$T$ and an edge~$\edge uv\in E(T)$, all other vertices~$w\in V(T)\setminus \{u,v\}$ can be classified into four classes (``flags''):
\begin{enumerate}
\item $\edge uw, \edge vw\in E(T)$,

\item $\edge wu, \edge wv\in E(T)$,

\item $\edge uw, \edge wv\in E(T)$.

\item $\edge vw, \edge wu\in E(T)$,
\end{enumerate}

Following and expanding a bit the notation in~\cite{ch}, we let~$O^A(u,v)$, $I^A(u,v)$, $\Tr_3^A(u,v)$ and~$\vec C_3^A(u,v)$ denote the numbers of vertices in the four classes
(taken in this order, see also Figure~\ref{fig:typesandflags} below) divided by~$|V(T)|-2$. One of the keystone characterizations in~\cite{ChGr2} ($P_4$, to be exact) says that a sequence of tournaments is quasi-random if and only if\footnote{This again can be more cleanly stated in the language of flag algebras~\cite[\S 3.2]{flag} (which we do in Section~\ref{sec:flag}) or digraphons~\cite[Example~9.2]{DiJa}.}~$O^A(u,v)$ is ``nearly''~1/4 for ``almost all'' edges~$\edge uv$. A similar statement for~$I^A(u,v)$ follows by duality.

Our methods allow us to do, in exactly the same manner, the two remaining cases: $\Tr_3^A(u,v)$ and~$\vec C_3^A(u,v)$. Stated in the finite language and combined with the previously known results, we now have that a sequence of tournaments~$\{T_n\}$ is quasi-random if and only if
\begin{equation} \label{flag_concentration}
\sum_{\edge uv\in E(T_n)} |F(u,v)-1/4| \leq o(n^2),
\end{equation}
where~$F$ corresponds to {\em any} of the four cases above.

The note is organized as follows. In Section~\ref{sec:flag} we remind some rudimentary concepts from the theory of flag
algebras and show how to treat quasi-randomness in that context. Again, the reader who feels uncomfortable with this
language (and is willing to tolerate a bit of coping with low-order error terms instead) should have no difficulties
replacing~$\sigma$-extensions with averaging over vertices or arcs in large but finite tournaments, see e.g.~\cite{flag_ams}.
In Section~\ref{sec:transitive} we prove the bound~\eqref{flagmatic}, and also that the equality is attained here if
and only if~$\phi$ is quasi-random (Theorem~\ref{thm:extchar}). Finally, in Section~\ref{sec:Aflags} we prove additional
characterizations of quasi-random tournaments in terms of ``flag concentrations'' (Theorems~\ref{thm:chartr3A}
and~\ref{thm:charc3A}).

\section{Quasi-Randomness in Flag Algebras}
\label{sec:flag}

In this section, we translate the results of quasi-randomness that we are going to use to the language of flag algebras. We assume the reader has some familiarity with not only the basic setting of flag algebra, but also with extensions of homomorphisms~\cite[\S 3.2]{flag}.

Following the notation of~\cite{flag,ch}, we consider the theory of tournaments~$T_{\text{Tournaments}}$ and we let~$1$ denote the (unique) type of size~$1$ and~$A$ denote the type of size~$2$ such that the vertex labelled with~$1$ beats
the other (labelled) vertex (see Figure~\ref{fig:typesandflags}). For a type~$\sigma$, we denote the unity of the
algebra~$\cA^\sigma$ by~$\One_\sigma$, and, as always, $\One_0$ is abbreviated to~$\One$.

\begin{figure}[ht]
  \begin{center}
    \begin{tikzpicture}[scale=1.1]
  \foreach \i in {0,...,3}{
    \pgfmathsetmacro{\base}{2*\i}
    \pgfmathsetmacro{\next}{\base+1}

    \coordinate (T3A\i) at (\base cm, 1cm);
    \coordinate (T3B\i) at (\next cm, 1cm);
    \coordinate (T3C\i) at ($1/2*(T3A\i) + 1/2*(T3B\i) + (0cm,0.707106cm)$);

    \coordinate (T4A\i) at (\base cm,-1.5cm);
    \coordinate (T4B\i) at (\next cm,-1.5cm);
    \coordinate (T4C\i) at (\next cm,-0.5cm);
    \coordinate (T4D\i) at (\base cm,-0.5cm);

    \coordinate (TpA\i) at (\base cm, -3cm);
    \coordinate (TpC\i) at (\next cm, -3cm);
    \coordinate (TpB\i) at ($1/2*(TpA\i) + 1/2*(TpC\i)$);

    \coordinate (F2A\i) at (\base cm,-4.5cm);
    \coordinate (F2B\i) at (\next cm,-4.5cm);

    \coordinate (F3A\i) at (\base cm, -6cm);
    \coordinate (F3B\i) at (\next cm, -6cm);
    \coordinate (F3C\i) at ($1/2*(F3A\i) + 1/2*(F3B\i) + (0cm,0.707106cm)$);

    \coordinate (F4A\i) at (\base cm,-8.5cm);
    \coordinate (F4B\i) at (\next cm,-8.5cm);
    \coordinate (F4C\i) at (\next cm,-7.5cm);
    \coordinate (F4D\i) at (\base cm,-7.5cm);
  }

  \foreach \i in {1,2}{
    \filldraw (T3A\i) circle (1pt);
    \filldraw (T3B\i) circle (1pt);
    \filldraw (T3C\i) circle (1pt);

    \draw[thick, arrows={-latex}] (T3A\i) -- (T3B\i);
    \draw[thick, arrows={-latex}] (T3B\i) -- (T3C\i);
  }
  \draw[thick, arrows={-latex}] (T3A1) -- (T3C1);
  \draw[thick, arrows={-latex}] (T3C2) -- (T3A2);

  \node[below] at ($1/2*(T3A1) + 1/2*(T3B1)$) {$\Tr_3$};
  \node[below] at ($1/2*(T3A2) + 1/2*(T3B2)$) {$\vec C_3$};
  
  \foreach \i in {0,...,3}{
    \filldraw (T4A\i) circle (1pt);
    \filldraw (T4B\i) circle (1pt);
    \filldraw (T4C\i) circle (1pt);
    \filldraw (T4D\i) circle (1pt);

    \draw[thick, arrows={-latex}] (T4A\i) -- (T4B\i);
    \draw[thick, arrows={-latex}] (T4B\i) -- (T4C\i);
  }
  \draw[thick, arrows={-latex}] (T4A0) -- (T4C0);
  \draw[thick, arrows={latex-}] (T4A1) -- (T4C1);
  \draw[thick, arrows={latex-}] (T4A2) -- (T4C2);
  \draw[thick, arrows={latex-}] (T4A3) -- (T4C3);

  \draw[thick, arrows={-latex}] (T4A0) -- (T4D0);
  \draw[thick, arrows={-latex}] (T4B0) -- (T4D0);
  \draw[thick, arrows={-latex}] (T4C0) -- (T4D0);

  \draw[thick, arrows={latex-}] (T4A1) -- (T4D1);
  \draw[thick, arrows={latex-}] (T4B1) -- (T4D1);
  \draw[thick, arrows={latex-}] (T4C1) -- (T4D1);

  \draw[thick, arrows={-latex}] (T4A2) -- (T4D2);
  \draw[thick, arrows={-latex}] (T4B2) -- (T4D2);
  \draw[thick, arrows={-latex}] (T4C2) -- (T4D2);

  \draw[thick, arrows={-latex}] (T4A3) -- (T4D3);
  \draw[thick, arrows={-latex}] (T4B3) -- (T4D3);
  \draw[thick, arrows={latex-}] (T4C3) -- (T4D3);

  \node[below] at ($1/2*(T4A0) + 1/2*(T4B0)$) {$\Tr_4$};
  \node[below] at ($1/2*(T4A1) + 1/2*(T4B1)$) {$W_4$};
  \node[below] at ($1/2*(T4A2) + 1/2*(T4B2)$) {$L_4$};
  \node[below] at ($1/2*(T4A3) + 1/2*(T4B3)$) {$R_4$};

  \filldraw (TpB1) circle (1pt);
  \filldraw (TpA2) circle (1pt);
  \filldraw (TpC2) circle (1pt);

  \draw[thick, arrows={-latex}] (TpA2) -- (TpC2);

  \node[left] at (TpB1) {$1$};
  \node[left] at (TpA2) {$1$};
  \node[right] at (TpC2) {$2$};

  \node[below] at (TpB1) {$1$};
  \node[below] at (TpB2) {$A$};

  \filldraw (F2B1) circle (1pt);
  \filldraw (F2A2) circle (1pt);

  \draw[thick, arrows={-latex}] (F2B1) -- (F2A2);

  \node[left] at (F2B1) {$1$};

  \node[below] at ($1/2*(F2B1) + 1/2*(F2A2)$) {$\alpha$};

  \foreach \i in {0,...,3}{
    \filldraw (F3A\i) circle (1pt);
    \filldraw (F3B\i) circle (1pt);
    \filldraw (F3C\i) circle (1pt);

    \draw[thick, arrows={-latex}] (F3A\i) -- (F3B\i);

    \node[left] at (F3A\i) {$1$};
    \node[right] at (F3B\i) {$2$};
  }
  \draw[thick, arrows={-latex}] (F3A0) -- (F3C0);
  \draw[thick, arrows={-latex}] (F3B0) -- (F3C0);

  \draw[thick, arrows={latex-}] (F3A1) -- (F3C1);
  \draw[thick, arrows={latex-}] (F3B1) -- (F3C1);

  \draw[thick, arrows={-latex}] (F3A2) -- (F3C2);
  \draw[thick, arrows={latex-}] (F3B2) -- (F3C2);

  \draw[thick, arrows={latex-}] (F3A3) -- (F3C3);
  \draw[thick, arrows={-latex}] (F3B3) -- (F3C3);

  \node[below] at ($1/2*(F3A0) + 1/2*(F3B0)$) {$O^A$};
  \node[below] at ($1/2*(F3A1) + 1/2*(F3B1)$) {$I^A$};
  \node[below] at ($1/2*(F3A2) + 1/2*(F3B2)$) {$\Tr_3^A$};
  \node[below] at ($1/2*(F3A3) + 1/2*(F3B3)$) {$\vec C_3^A$};

  \filldraw (F4B1) circle (1pt);
  \filldraw (F4C1) circle (1pt);
  \filldraw (F4A2) circle (1pt);
  \filldraw (F4D2) circle (1pt);

  \draw[thick, arrows={-latex}] (F4B1) -- (F4A2);
  \draw[thick, arrows={-latex}] (F4D2) -- (F4A2);
  \draw[thick, arrows={-latex}] (F4B1) -- (F4C1);
  \draw[thick, arrows={-latex}] (F4C1) -- (F4D2);
  \draw[thick, arrows={-latex}] (F4B1) -- (F4D2);
  \draw[thick, arrows={-latex}] (F4C1) -- (F4A2);

  \node[left] at (F4B1) {$1$};
  \node[right] at (F4A2) {$2$};
  \node[below] at ($1/2*(F4B1) + 1/2*(F4A2)$) {$\Tr_4^A$};
\end{tikzpicture}
    \caption{Types and flags used.}
    \label{fig:typesandflags}
  \end{center}
\end{figure}
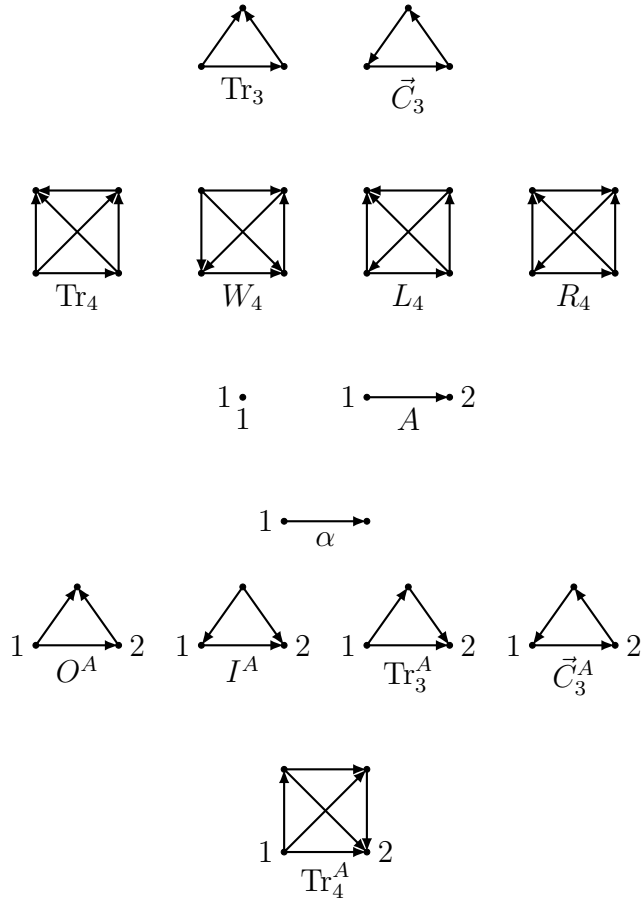

We have already introduced the notation~$\Tr_k$ to denote the transitive tournament of size~$k$. We let~$\vec C_3$ be the~$3$-directed cycle (that is, the only other tournament of size~$3$) and we define the following tournaments of size~$4$.
\begin{enumerate}
\item The tournament~$W_4$, which is the (unique) \emph{non-transitive} tournament of size~$4$ that has a vertex with outdegree~$3$ (that is, there is a ``winner'' in~$W_4$);
\item The tournament~$L_4$, which is the (unique) \emph{non-transitive} tournament of size~$4$ that has a vertex with indegree~$3$ (that is, there is a ``loser'' in~$L_4$);
\item The tournament~$R_4$, which is the (unique) tournament of size~$4$ that has outdegree sequence~$(1,1,2,2)$.
\end{enumerate}
Note that along with~$\Tr_4$, this list covers all tournaments of size~$4$.

We define the~$1$-flag~$\alpha$ as the (unique)~$1$-flag of size~$2$ in which the labelled vertex beats the unlabelled vertex. We also define the following~$A$-flags of size~$3$.
\begin{enumerate}
\item The flag~$O^A$, in which the only unlabelled vertex is beaten by both labelled vertices;
\item The flag~$I^A$, in which the only unlabelled vertex beats both labelled vertices;
\item The flag~$\Tr_3^A$, which is the only remaining~$A$-flag whose underlying model is~$\Tr_3$;
\item The flag~$\vec C_3^A$, which is the only~$A$-flag whose underlying model is~$\vec C_3$.
\end{enumerate}
Again, this is the complete list of~$A$-flags of size~$3$.

We also follow the original notation of flag algebras when using the downward operator~$\llb{}\cdot{}\rrb_\sigma$ to
the~$0$-algebra or when using~$\sigma$-extensions of homomorphisms~$\phi\in\Hom^+(\cA^0,\RR)$
(which are denoted by~$\bm{\phi^\sigma}$). We remind that~$\bm{\phi^\sigma}$ can be conveniently viewed~\cite[Definition~10]{flag} as
the unique~$\Hom^+(\cA^\sigma,\RR)$-valued random variables satisfying  the identities
\begin{equation} \label{extensions}
\expect{\bm{\phi^\sigma}(F)} = \frac{\phi(\llb F\rrb_\sigma)}{\phi(\llbracket\One_\sigma\rrbracket_\sigma)}
\end{equation}
for every~$F\in\cF^\sigma$.

Finally, for a flag~$F_0$ of type~$\sigma$ with~$|\sigma|+1$ vertices we have the algebra homomorphism~$\pi^{F_0}\function{\mathcal A^0}{\mathcal A^\sigma_{F_0}}$, where~$\mathcal A^\sigma_{F_0}$ is the localization of the algebra~$\mathcal A^\sigma$ with respect to the multiplicative system~$\set{F_0^\ell}{\ell\in \mathbb N}$~\cite[\S 2.3.2]{flag}. Intuitively, it corresponds to taking the sub-model induced by the flag~$F_0$, and the
localization is needed for proper normalization resulting from decreasing the set of vertices. In this note we will only need the operator~$\pi^{O^A}$.

We will not need these concepts in the more complicated scenario when the smaller type is also non-trivial.

\smallskip
Let us denote the homomorphism of~$\Hom^+(\cA^0,\RR)$ corresponding to the random tournament by~$\phirdm$, that is, it is the almost sure limit of the sequence of random tournaments (where each arc orientation is picked independently with probability~$1/2$) when the number of vertices goes to infinity.

Note that for every tournament~$T$, we have
\begin{align*}
  \phirdm(T) & = \frac{\lv V(T)\rv!}{\lv\Aut(T)\rv\cdot 2^{\binom{|T|}{2}}}.
\end{align*}

In particular, since transitive tournaments possess only trivial automorphisms, we have
\begin{align*}
  \phirdm(\Tr_k) & = \frac{k!}{2^{\binom{k}{2}}},
\end{align*}
for every~$k\in\NN$.

As for the quasi-randomness properties (the~$P$ properties of Chung--Graham~\cite{ChGr2}), we are interested in the following ones.
\begin{itemize}
\item $P_2$: $\phi(\Tr_4 + R_4) = 3/4$;
\item $P_4$: $\bm{\phi^A}(O^A) = 1/4$ a.s.
\end{itemize}
That is, the above are equivalent to each other, and are equivalent to the fact~$\phi=\phirdm$ (which, in the terminology of~\cite{ChGr2}, is precisely~$\forall s\in\NN, P_1(s)$).

The translation of property~$P_2$ follows from simple arithmetics relating homomorphism densities to flag algebra densities, and the translation of~$P_4$ follows from the definition of extensions of homomorphisms.

\section{The Minimum Density of Transitive Tournaments} \label{sec:transitive}

We start by defining for every~$k\geq 2$ the~$1$-flag~$\Tr_k^W$ as the flag obtained from~$\Tr_k$ by labelling its winner (i.e.~the unique vertex with outdegree~$k-1$) and the~$A$-flag~$\Tr_k^{W2}$ as the flag obtained from~$\Tr_k$ by labelling its winner with the label~$1$ and its runner-up (i.e.~the unique vertex with outdegree~$k-2$) with the label~$2$. In particular, with this definition we have~$\alpha=\Tr_2^W$ and~$O^A=\Tr_3^{W2}$.

We further note that~$\alpha^2 = \Tr_3^W$, $\llb\alpha\rrb_1 = \Tr_2/2 = \One/2$ and for every~$k\geq 2$, we have
\begin{align*}
  \Tr_k & = k\llbracket\Tr_k^W\rrbracket_1 = k(k-1)\llbracket\Tr_k^{W2}\rrbracket_A;\\
  \Tr_k^{W2} & = \pi^{O^A}(\Tr_{k-2})\cdot(O^A)^{k-2}.
\end{align*}

We are now ready to prove the main result. We remind the reader that, although the proof is presented for all~$k\in\NN$, the result for~$k\leq 4$ was known before.

\begin{theorem}\label{thm:extchar}
  In the theory of tournaments, for every~$k\in\NN$ and every~$\phi\in\Hom^+(\cA^0,\RR)$, we have
  \begin{align*}
    \phi(\Tr_k) & \geq \frac{k!}{2^{\binom{k}{2}}}.
  \end{align*}

  Furthermore, for any fixed~$k\geq 4$, the equality holds if and only if~$\phi$ is the quasi-random homomorphism~$\phirdm$.
\end{theorem}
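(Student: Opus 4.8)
The plan is to exploit the recursive identities established just before the theorem, namely $\Tr_k = k(k-1)\llbracket\Tr_k^{W2}\rrbracket_A$ and $\Tr_k^{W2} = \pi^{O^A}(\Tr_{k-2})\cdot (O^A)^{k-2}$, together with the definition~\eqref{extensions} of $A$-extensions. First I would rewrite $\phi(\Tr_k)$ as an expectation over the random homomorphism $\bm{\phi^A}$. Since $\phi(\llbracket\One_A\rrbracket_A)$ is the density of the type~$A$ (an arbitrary arc), which equals $1/2$ for every $\phi$ (every tournament has exactly half its pairs oriented each way, asymptotically — or more carefully, $\llbracket\One_A\rrbracket_A$ together with its dual sums to $\One$ and they are equal by the arc-reversing symmetry of the theory, giving $\phi(\llbracket\One_A\rrbracket_A)=1/2$), we get
\[
\phi(\Tr_k) = k(k-1)\cdot \phi\of{\llbracket \Tr_k^{W2}\rrbracket_A} = \frac{k(k-1)}{2}\,\expect{\bm{\phi^A}(\Tr_k^{W2})} = \frac{k(k-1)}{2}\,\expect{\bm{\phi^A}\of{\pi^{O^A}(\Tr_{k-2})}\cdot \bm{\phi^A}(O^A)^{k-2}},
\]
where in the last step I use that $\pi^{O^A}$ is an algebra homomorphism and that $\bm{\phi^A}$ is multiplicative almost surely. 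Now set $X \df \bm{\phi^A}(O^A)$, a random variable taking values in $[0,1]$; the localized homomorphism $\bm{\phi^A}\comp\pi^{O^A}$, being the homomorphism attached to the sub-model induced by the flag $O^A$, is an honest element of $\Hom^+(\cA^0,\RR)$ (after renormalizing by powers of $X$, which is exactly what the localization $\cA^\sigma_{O^A}$ is for), and applying it to $\Tr_{k-2}$ gives a quantity I will lower-bound by induction.

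The key inequality is then the following: by the induction hypothesis applied inside the induced sub-model, $\bm{\phi^A}\of{\pi^{O^A}(\Tr_{k-2})} \geq \frac{(k-2)!}{2^{\binom{k-2}{2}}}\cdot X^{k-2}$ almost surely (the factor $X^{k-2}$ is the normalization coming from the localization — the flag $O^A$ has one unlabelled vertex, so passing $k-2$ times through $\pi^{O^A}$ multiplies by $X^{k-2}$). Hence
\[
\phi(\Tr_k) \;\geq\; \frac{k(k-1)}{2}\cdot\frac{(k-2)!}{2^{\binom{k-2}{2}}}\,\expect{X^{2(k-2)}} \;=\; \frac{k!}{2^{\binom{k-2}{2}+1}}\,\expect{X^{2k-4}}.
\]
Since $\binom{k}{2} = \binom{k-2}{2} + (2k-3) = \binom{k-2}{2} + 1 + (2k-4)$, this reads $\phi(\Tr_k) \geq \frac{k!}{2^{\binom{k}{2}}}\cdot 2^{2k-4}\expect{X^{2k-4}}$. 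So the bound follows once we know $\expect{X^{2k-4}} \geq 1/2^{2k-4} = (\expect{X})^{2k-4}$ provided we can show $\expect{X} = 1/2$ — which is precisely the statement that $\phi(O^A + I^A)$, suitably normalized, is $1/2$, again a consequence of the arc-reversal symmetry swapping $O^A$ and $I^A$. Then Jensen's inequality applied to the convex function $t\mapsto t^{2k-4}$ gives $\expect{X^{2k-4}} \geq (\expect X)^{2k-4}$, closing the induction. (The base cases $k=0,1,2,3$ are checked directly, with $k=3$ amounting to $\expect{X}=1/4$... wait — one must be careful here: the correct normalization makes $\expect{X} = 1/4$, not $1/2$, since $O^A$ is one of \emph{four} $A$-flags of size~$3$, and then the arithmetic $2^{2k-4}(1/4)^{?}$ must be re-tracked; I would recompute the exponent bookkeeping carefully at this point, but the structure — recursion plus a single application of Jensen — is robust.)

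For the equality characterization when $k\geq 4$: equality in the chain above forces equality in Jensen's inequality for $t\mapsto t^{2k-4}$, which for $k\geq 4$ (so the exponent is $\geq 4 > 1$, strictly convex) means $X$ is almost surely constant, i.e. $\bm{\phi^A}(O^A) = 1/4$ a.s. — but this is exactly property $P_4$ from Section~\ref{sec:flag}, which is equivalent to $\phi = \phirdm$. Conversely $\phirdm$ attains equality by the explicit formula $\phirdm(\Tr_k) = k!/2^{\binom k2}$ already recorded. The main obstacle I anticipate is \emph{not} the analytic core (which is just Jensen) but the careful bookkeeping of the localization normalizations — making sure the powers of $X$ introduced by iterating $\pi^{O^A}$ match the powers of $2$ in the target bound, and correctly identifying $\expect{X}$ and the exponent on $X$ so that Jensen applies in the right direction with the right equality case. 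A secondary subtlety is justifying that $\bm{\phi^A}\comp\pi^{O^A}$ is a genuine $\Hom^+(\cA^0,\RR)$ to which the induction hypothesis applies, rather than merely a linear functional; this should follow from the general theory of localizations and extensions in~\cite[\S 2.3.2, \S 3.2]{flag}.
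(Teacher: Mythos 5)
Your overall strategy is the paper's: write $\phi(\Tr_k)$ as $\tfrac{k(k-1)}{2}\expect{\bm{\phi^A}(\pi^{O^A}(\Tr_{k-2}))\,\bm{\phi^A}(O^A)^{k-2}}$, apply the induction hypothesis to the localized homomorphism, and finish with Jensen's inequality plus property $P_4$. But two steps, as you state them, are wrong, and you yourself flag that the exponent bookkeeping does not close. First, the inductive bound: the whole point of the localization $\cA^A_{O^A}$ is that $\bm{\phi^A}\comp\pi^{O^A}$ is \emph{already} a correctly normalized element of $\Hom^+(\cA^0,\RR)$ whenever $\bm{\phi^A}(O^A)>0$, so the induction hypothesis gives $\bm{\phi^A}(\pi^{O^A}(\Tr_{k-2}))\geq (k-2)!/2^{\binom{k-2}{2}}$ almost surely, with \emph{no} factor of $X^{k-2}$. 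The power $X^{k-2}$ enters the product exactly once, through the separate factor $\bm{\phi^A}\of{(O^A)^{k-2}}$. By inserting an additional $X^{k-2}$ into the inductive bound you double-count it (you are conflating $\pi^{O^A}(\Tr_{k-2})$, an element of the localization, with the unnormalized flag $\Tr_k^{W2}=\pi^{O^A}(\Tr_{k-2})\cdot(O^A)^{k-2}$), you arrive at $\expect{X^{2k-4}}$ instead of $\expect{X^{k-2}}$, and the resulting bound falls short of $k!/2^{\binom{k}{2}}$ by a factor of $4^{k-2}$ --- which is exactly why your exponents refuse to match.

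Second, $\expect{X}$ is not $1/2$, nor is it $1/4$ by any symmetry: arc reversal relates $\phi$ to its dual homomorphism, not to itself, and for the transitive limit one has $\expect{\bm{\phi^A}(O^A)}=1/3$. The correct identity is $\expect{X}=2\phi(\llbracket O^A\rrbracket_A)=\phi(\Tr_3)/3$, so to conclude $\expect{X}\geq 1/4$ you need the case $k=3$, i.e.\ $\phi(\Tr_3)\geq 3/4$. That case is not a triviality to be ``checked directly'': it requires its own Cauchy--Schwarz argument on the type-$1$ flag $\alpha$, namely $\phi(\Tr_3)=3\expect{\bm{\phi^1}(\alpha)^2}\geq 3\expect{\bm{\phi^1}(\alpha)}^2=3/4$, and it is needed both as the base of the two-step induction for odd $k$ and inside \emph{every} inductive step. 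With these two repairs --- drop the spurious $X^{k-2}$ so that Jensen gives $\expect{X^{k-2}}\geq(\phi(\Tr_3)/3)^{k-2}\geq 4^{-(k-2)}$, and supply the $k=3$ case --- your argument becomes the paper's proof, and your equality analysis (strict convexity of $x\mapsto x^{k-2}$ for $k\geq 4$ forces $X$ to be a.s.\ constant, equal to $\phi(\Tr_3)/3=1/4$, which is $P_4$) then goes through essentially as written.
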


\begin{proof}
  For~$k=0,1,2$, the result is trivial, because as flag algebra elements, we have~$\Tr_0=\Tr_1=\Tr_2=\One$ and the right hand side of the formula evaluates to~$1$ in these cases.

  For~$k=3$, note that since~$\llbracket\One_1\rrbracket_1 = \One$, we have
  $$
    \phi(\Tr_3) = 3\phi(\llbracket\alpha^2\rrbracket_1)
    = 3\expect{\bm{\phi^1}(\alpha)^2}
     \geq
    3\expect{\bm{\phi^1}(\alpha)}^2
    = 3\phi(\llb\alpha\rrb_ 1)^2
    = \frac{3}{4}
    = \frac{3!}{2^{\binom{3}{2}}},
  $$

  Now we proceed by induction. Suppose that~$k\geq 4$ and that the result is valid for~$k-2$.

  Since~$\llbracket\One_A\rrbracket_A = \One/2$, we have
  \begin{align*}
    \phi(\Tr_k) & =
    k(k-1)\phi(\llbracket\pi^{O^A}(\Tr_{k-2})\cdot(O^A)^{k-2}\rrbracket_A)
    \\
    & = \frac{k(k-1)}{2} \expect{
    \bm{\phi^A}(\pi^{O^A}(\Tr_{k-2}))
    \bm{\phi^A}\of{(O^A)^{k-2}}}.
  \end{align*}

  Since~$\bm{\phi^A}(O^A)> 0$ implies that~$\phi^A\circ \pi^{O^A}\in\Hom^+(\cA^0,\RR)$, by inductive hypothesis we have
  $$
    \expect{
    \bm{\phi^A}(\pi^{O^A}(\Tr_{k-2}))
    \bm{\phi^A}\of{(O^A)^{k-2}}}
    \geq
    \frac{(k-2)!}{2^{\binom{k-2}{2}}}
    \expect{\bm{\phi^A}((O^A)^{k-2})}.
  $$

  By Jensen's inequality, we have
  \begin{equation} \label{jensen}
  \expect{\bm{\phi^A}(O^A)^{k-2}}
   \geq
    \expect{\bm{\phi^A}(O^A)}^{k-2}
    = (2\phi(\llbracket O^A\rrbracket_A))^{k-2}
    = \(\frac{\phi(\Tr_3)}{3}\)^{k-2}.
  \end{equation}

  Since we already proved the case~$k=3$, we have~$\phi(\Tr_3)\geq 3/4$, so putting things
  together, we get
  \begin{equation} \label{chain}
    \phi(\Tr_k) \geq
    \frac{k(k-1)}{2}\cdot\frac{(k-2)!}{2^{\binom{k-2}{2}}}
    \(\frac{\phi(\Tr_3)}{3}\)^{k-2}
    \geq \frac{k!}{2^{\binom{k}{2}}}.
  \end{equation}

  \medskip

  For the ``furthermore'' part, note that if equality holds for~$\phi$, then along the proof we have equalities instead of inequalities.

  In particular, since Jensen's inequality was used in~\eqref{jensen} with the function~$x^{k-2}$, which for~$k\geq 4$ is strictly convex\footnote{This is precisely the reason why the second part of Theorem~\ref{thm:extchar} does not hold for~$k=3$.} on~$[0,+\infty)$, we have that~$\bm{\phi^A}(O^A)$ is a.s.~a constant, denote it by~$C$.
Furthermore, we also have~$\phi(\Tr_3) = 3/4$ as it is used in the chain of inequalities~\eqref{chain}. This allows us to compute~$C$ as follows:
  $$
    C=\expect{\bm{\phi^A}(O^A)} =
    2\phi(\llbracket O^A\rrbracket_A)
    = \frac{\phi(\Tr_3)}{3}
    = \frac{1}{4}.
  $$

  Therefore~$\phi$ satisfies~$P_4$, hence~$\phi=\phirdm$.
\end{proof}

\begin{remark*}
  Note that if we wanted to prove just the inequality statement (i.e.~without the ``furthermore'' part), we could have done the induction with a simpler argument involving~$\pi^\alpha(\Tr_{k-1})\alpha^{k-1}$ instead of~$\pi^{O^A}(\Tr_{k-2})\cdot(O^A)^{k-2}$.
  Moreover, modulo the following (straightforward) generalization of Cauchy--Schwarz inequality~\cite[(22)]{flag}:
  $$
  \eval{f^k}{\sigma} \cdot \eval{\One_\sigma}{\sigma}^{k-1} \geq \eval{f}{\sigma}^k
  $$
  we could have also avoided the extension~$\bm{\phi^A}$ at all and reduced the whole argument to a several-lines calculation.
\end{remark*}

\section{New Characterizations with~$A$-Flags and~$A$-Extensions} \label{sec:Aflags}

As we already said in the introduction, Chung and~Graham presented in~\cite{ChGr2} quasi-random characterizations involving the~$A$-flag~$O^A$. Reverting all arcs of the tournament, we get the dual characterization involving~$I^A$ (which is stated below for completeness).\footnote{Just reverting the arcs, we actually get a characterization involving flags of the other type of size~$2$, but then we may use the flag algebra isomorphism that swaps the labels~$1$ and~$2$ to arrive at the desired characterization.}
\begin{itemize}
\item $P_4'$: $\bm{\phi^A}(I^A) = 1/4$ a.s.
\end{itemize}

In this section, we prove that the two remaining~$A$-flags~$\Tr_3^A$ and~$\vec C_3^A$ of size~$3$ also characterize quasi-randomness of tournaments. Note, however, that these flags are self-dual, so they must be treated separately.

\begin{theorem}\label{thm:chartr3A}
  In the theory of tournaments, for every~$\phi\in\Hom^+(\cA^0,\RR)$, we have that
  \begin{align*}
    \bm{\phi^A}(\Tr_3^A) = \frac{1}{4} \as
  \end{align*}
  if and only if~$\phi$ is the quasi-random homomorphism~$\phirdm$.
\end{theorem}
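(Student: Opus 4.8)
The plan is to reduce the statement to a known quasi-random characterization, most naturally $P_4$ (i.e.\ $\bm{\phi^A}(O^A) = 1/4$ a.s.), by relating the random variable $\bm{\phi^A}(\Tr_3^A)$ to $\bm{\phi^A}(O^A)$ and $\bm{\phi^A}(I^A)$. The first observation is that, for a fixed edge $\edge uv$, the four flag densities $O^A(u,v)$, $I^A(u,v)$, $\Tr_3^A(u,v)$, $\vec C_3^A(u,v)$ sum to $1$, so at the level of the $A$-extension we have the almost sure identity $\bm{\phi^A}(O^A) + \bm{\phi^A}(I^A) + \bm{\phi^A}(\Tr_3^A) + \bm{\phi^A}(\vec C_3^A) = \One_A$. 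This alone is not enough; I would look for a second linear (or quadratic) relation. A promising route: express the density of $\Tr_4$ (or one of $W_4$, $L_4$, $\vec C_3\cdot$something) inside $\cA^A$ in terms of products of the $A$-flags of size $3$, using the multiplication rule in $\cA^A$, and then push down to $\cA^0$ via $\eval{\cdot}{A}$. Concretely, I expect identities of the shape $\eval{(O^A)(I^A)}{A}$, $\eval{(\Tr_3^A)^2}{A}$, etc., to encode densities of $4$-vertex tournaments, and taking expectations of $\bm{\phi^A}$ of suitable such products will produce equations linking $\expect{\bm{\phi^A}(\Tr_3^A)}$, $\expect{\bm{\phi^A}(\Tr_3^A)^2}$ to quantities already pinned down.

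The key steps, in order, would be: (1) record the linear identity above and compute $\expect{\bm{\phi^A}(\Tr_3^A)} = 2\phi(\eval{\Tr_3^A}{A})$, evaluating the right-hand side in terms of $\phi(\Tr_3)$ and $\phi(\vec C_3)$ — since $\Tr_3 + \vec C_3 = \One$ and, by the argument in Theorem~\ref{thm:extchar}, the hypothesis will force $\phi(\Tr_3) = 3/4$, hence $\phi(\vec C_3) = 1/4$; (2) show that the hypothesis $\bm{\phi^A}(\Tr_3^A) = 1/4$ a.s.\ is equivalent to the chain of inequalities in the proof of Theorem~\ref{thm:extchar} being tight, by exhibiting $\phi(\Tr_4)$ or a related $4$-vertex density as a non-negative combination in which $\bm{\phi^A}(\Tr_3^A) - 1/4$ appears with a definite sign, squared or otherwise controlled; (3) conclude $\phi(\Tr_3) = 3/4$ and then invoke the ``furthermore'' part of Theorem~\ref{thm:extchar}, or directly derive $P_4$. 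Step (2) is where I would try a Cauchy--Schwarz / Jensen maneuver: writing $\bm{\phi^A}(\Tr_3^A)$ as an average of a product of two $1$-flag densities over the two endpoints of the $A$-edge, one gets $\bm{\phi^A}(\Tr_3^A) = \bm{\phi^A}(\alpha_{\text{out}}) \cdot \bm{\phi^A}(\alpha_{\text{in}})$ for the appropriate local out/in densities, and then a variance computation over the choice of endpoint should show this product is $1/4$ a.s.\ only when both factors concentrate at $1/2$, which is exactly quasi-randomness (property $P_3$ of Chung--Graham, ``all out-degrees are $n/2 + o(n)$'').

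The main obstacle I anticipate is step (2): unlike $O^A$, the flag $\Tr_3^A$ is self-dual and its density is a \emph{product} of two endpoint-local quantities rather than a single one, so there is no immediate convexity bound of the form used in \eqref{jensen}. I would need the right auxiliary identity — presumably expressing a symmetric function of the four size-$3$ $A$-flag densities (such as $\bm{\phi^A}(O^A)\bm{\phi^A}(I^A)$ minus $\bm{\phi^A}(\Tr_3^A)\bm{\phi^A}(\vec C_3^A)$, or the analogue of the ``$P_2$'' combination $\Tr_4 + R_4$) in a way that makes the deviation from the quasi-random values manifestly sign-definite. Once such an identity is in hand, taking $\expect{\cdot}$ and using that the extension is a genuine positive homomorphism-valued random variable should close the argument; the remaining computations (multiplication tables in $\cA^A$, counting $4$-vertex extensions) are routine but must be done carefully to get the constants right.
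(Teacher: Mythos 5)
You are circling the correct mechanism but never land on it, and the one concrete device you offer for your step~(2) is false. The identity you mention only in passing as ``promising'' --- that $\eval{(\Tr_3^A)^2}{A}$ encodes a $4$-vertex density --- is the whole proof. Indeed $(\Tr_3^A)^2=\Tr_4^A$ in $\cA^A$ (in $\Tr_4^A$ both unlabelled vertices are out-neighbours of $1$ and in-neighbours of $2$, and $\Tr_4^A$ is the only size-$4$ $A$-flag with that property), and $\Tr_4=12\eval{\Tr_4^A}{A}$, so
\begin{align*}
\phi(\Tr_4)\;=\;6\expect{\bm{\phi^A}(\Tr_3^A)^2}\;\geq\;6\expect{\bm{\phi^A}(\Tr_3^A)}^2\;=\;\frac{2}{3}\,\phi(\Tr_3)^2\;\geq\;\frac{3}{8}.
\end{align*}
This is verbatim the Jensen step of~\eqref{jensen}; the ``main obstacle'' you anticipate (self-duality of $\Tr_3^A$ blocking a convexity bound) is illusory, because squaring the flag is itself the convexity maneuver. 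Equality throughout forces $\bm{\phi^A}(\Tr_3^A)$ to be a.s.\ the constant $\expect{\bm{\phi^A}(\Tr_3^A)}=\phi(\Tr_3)/3=1/4$, and conversely the hypothesis gives $\phi(\Tr_4)=6\cdot(1/4)^2=3/8$ outright; either way the ``furthermore'' part of Theorem~\ref{thm:extchar} with $k=4$ closes the loop. No linear relation among the four size-$3$ $A$-flags, no $O^A\cdot I^A$ products, and no detour through $P_4$ are needed.

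The concretely wrong step is the claimed factorization $\bm{\phi^A}(\Tr_3^A)=\bm{\phi^A}(\alpha_{\text{out}})\cdot\bm{\phi^A}(\alpha_{\text{in}})$. For a fixed arc $\edge uv$, the quantity $\Tr_3^A(u,v)$ is the density of the \emph{intersection} of the out-neighbourhood of $u$ with the in-neighbourhood of $v$, which is not the product of the two separate densities (the two events are not independent over a random $w$), so the variance computation you build on top of it never gets started. Even if some version of it were salvageable, out-degree concentration at $1/2$ would prove too little: near-regular tournaments (e.g.\ circulant ones) are far from quasi-random, which is why the argument must route through a genuine $4$-vertex density such as $\phi(\Tr_4)$ and the equality case of Theorem~\ref{thm:extchar}, rather than through degree information alone.
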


\begin{proof}
This is similar to the second part of the proof of Theorem~\ref{thm:extchar}, except that now we use the flag~$\Tr_3^A$ instead of~$O^A$.
  Let~$\Tr_4^A$ denote the~$A$-flag obtained from~$\Tr_4$ by labelling its winner with label~$1$ and labelling its loser with label~$2$ (see Figure~\ref{fig:typesandflags}).

  Note that~$(\Tr_3^A)^2 = \Tr_4^A$ and that~$\Tr_4 = 12\llb \Tr_4^A\rrb_A$. Since~$\llbracket\One_A\rrbracket_A = \One/2$, we have
  $$
    \phi(\Tr_4) = 12\phi(\llbracket (\Tr_3^A)^2\rrbracket_A)
    = 6\expect{\bm{\phi^A}(\Tr_3^A)^2}\geq 6\expect{\bm{\phi^A}(\Tr_3^A)}^2
    = 6(2\phi(\llbracket\Tr_3^A\rrbracket_A))^2
    = \frac{2}{3}\phi(\Tr_3)^2.
  $$

  By Theorem~\ref{thm:extchar}, we know that~$\phi(\Tr_3)\geq 3/4$, hence we have
  \begin{align*}
    \phi(\Tr_4) & \geq \frac{3}{8} = \frac{4!}{2^{\binom{4}{2}}}.
  \end{align*}

  But, since~$x^2$ is both strictly convex and strictly increasing on~$[0,+\infty)$, we have that equality holds if and only if we have almost surely
  \begin{align*}
    \bm{\phi^A}(\Tr_3^A) & =
    \expect{\phi^A(\Tr_3^A)}
    = \frac{\phi(\Tr_3)}{3} = \frac{1}{4}.
  \end{align*}

  By the ``furthermore'' part of Theorem~\ref{thm:extchar}, we have
  \begin{align*}
    \bm{\phi^A}(\Tr_3^A) & = \frac{1}{4}\as \iff \phi(\Tr_4) = \frac{4!}{2^{\binom{4}{2}}}
    \iff \phi = \phirdm.
    \qedhere
  \end{align*}
\end{proof}

 For the final flag~$\vec C_3^A$ the proof cannot be done analogously because the underlying tournament is now~$\vec C_3$ instead of~$\Tr_3$. Nevertheless, we are able to obtain the result by other means.

\begin{theorem}\label{thm:charc3A}
  In the theory of tournaments, for every~$\phi\in\Hom^+(\cA^0,\RR)$, we have that
  \begin{align*}
    \bm{\phi^A}(\vec C_3^A) = \frac{1}{4} \as
  \end{align*}
  if and only if~$\phi$ is the quasi-random homomorphism~$\phirdm$.
\end{theorem}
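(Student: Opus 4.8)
The plan is to relate $\bm{\phi^A}(\vec C_3^A)$ to the already‑handled quantities $\bm{\phi^A}(O^A)$, $\bm{\phi^A}(I^A)$, and $\bm{\phi^A}(\Tr_3^A)$ via the obvious partition‑of‑unity identity among the four $A$‑flags of size $3$. Since $\{O^A, I^A, \Tr_3^A, \vec C_3^A\}$ is the complete list of $A$‑flags on three vertices, we have in $\cA^A$
\begin{equation*}
  O^A + I^A + \Tr_3^A + \vec C_3^A = \One_A,
\end{equation*}
and applying the homomorphism $\bm{\phi^A}$ (which is multiplicative and sends $\One_A$ to $1$) gives, almost surely,
\begin{equation*}
  \bm{\phi^A}(O^A) + \bm{\phi^A}(I^A) + \bm{\phi^A}(\Tr_3^A) + \bm{\phi^A}(\vec C_3^A) = 1.
\end{equation*}
So $\bm{\phi^A}(\vec C_3^A) = 1/4$ a.s.\ is equivalent to $\bm{\phi^A}(O^A) + \bm{\phi^A}(I^A) + \bm{\phi^A}(\Tr_3^A) = 3/4$ a.s. The forward direction (quasi‑randomness $\Rightarrow$ the equality) is then immediate: $\phi = \phirdm$ makes each of the three summands equal to $1/4$ by $P_4$, $P_4'$, and Theorem~\ref{thm:chartr3A}.

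For the converse, the difficulty is that, unlike in Theorems~\ref{thm:extchar} and~\ref{thm:chartr3A}, the flag $\vec C_3^A$ does not square to a transitive‑tournament flag, so there is no direct convexity argument forcing $\phi(\Tr_3) = 3/4$. Instead I would exploit averages. Taking the downward operator $\llb\cdot\rrb_A$ and using $\llb\One_A\rrb_A = \One/2$, the identity $O^A + I^A + \Tr_3^A + \vec C_3^A = \One_A$ gives $\llb O^A\rrb_A + \llb I^A\rrb_A + \llb \Tr_3^A\rrb_A + \llb \vec C_3^A\rrb_A = \One/2$; evaluating at $\phi$ and recalling $\phi(\llb O^A\rrb_A) = \phi(\llb \Tr_3^A\rrb_A) = \phi(\Tr_3)/6$ (and the analogous identity for $I^A$, since the underlying model of $I^A$ is also $\Tr_3$, and for $\vec C_3^A$ one gets $\phi(\llb\vec C_3^A\rrb_A) = \phi(\vec C_3)/3$) yields a linear relation $\phi(\Tr_3) + \phi(\vec C_3) = 1$, which is just the trivial fact that every triple spans either $\Tr_3$ or $\vec C_3$. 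The substantive step is to bound the \emph{variance}: $\expect{\bm{\phi^A}(\vec C_3^A)^2} = \bm{\phi^A}((\vec C_3^A)^2)$ expanded via $\eval{(\vec C_3^A)^2}{A}$ expresses $\phi$ of a specific size‑$4$ tournament combination, and assuming $\bm{\phi^A}(\vec C_3^A) = 1/4$ a.s.\ pins this second moment to $1/16$, i.e.\ forces the variance of $\bm{\phi^A}(\vec C_3^A)$ to vanish in a way that, when compared with the general inequality $\expect{\bm{\phi^A}(\vec C_3^A)^2}\geq \expect{\bm{\phi^A}(\vec C_3^A)}^2$ together with Theorem~\ref{thm:extchar}'s bound $\phi(\Tr_4)\geq 3/8$, should squeeze $\phi(\Tr_4)$ down to exactly $3/8$; then the ``furthermore'' part of Theorem~\ref{thm:extchar} finishes the argument.

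Concretely, I expect the cleanest route is: compute $\eval{(\vec C_3^A)^2}{A}$ as an explicit $\NN$‑combination of $\Tr_4, W_4, L_4, R_4$ (this is a finite, mechanical enumeration of how two cyclic triangles sharing the labelled edge $A$ can sit in a $4$‑vertex tournament), apply $\bm{\phi^A}$ and take expectations to get $6\,\expect{\bm{\phi^A}(\vec C_3^A)^2}$ as a linear form $L(\phi(\Tr_4),\phi(W_4),\phi(L_4),\phi(R_4))$, then use the hypothesis to turn the inequality $\expect{\bm{\phi^A}(\vec C_3^A)^2}\geq (1/4)^2$ into a linear inequality in the size‑$4$ densities; combined with $P_2$‑type identities ($\phi(\Tr_4)+\phi(W_4)+\phi(L_4)+\phi(R_4)=1$ and the density relations expressing $\phi(\Tr_3),\phi(\vec C_3)$ in terms of size‑$4$ densities) and with $\phi(\Tr_4)\geq 3/8$, deduce $\phi(\Tr_4)=3/8$.

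The main obstacle, and the only place requiring care, is the bookkeeping in computing $\eval{(\vec C_3^A)^2}{A}$: one must correctly count multiplicities (each size‑$4$ tournament contributes with a weight equal to the number of ways to split its four vertices into the two ``$A$‑edge'' vertices plus two extra vertices so that each extra vertex closes a cyclic triangle with the $A$‑edge). If that coefficient computation comes out so that the resulting linear program is actually tight only at $\phirdm$, the theorem follows; if it is not tight on its own, I would supplement it with the companion identity obtained by squaring $\Tr_3^A + \vec C_3^A$ (whose underlying models are controlled) or by bringing in the extension‑based second‑moment bounds for $O^A$ and $I^A$ from Theorems~\ref{thm:extchar} and the dual of $P_4$, so that the four size‑$4$ densities are simultaneously constrained enough to force $\phi = \phirdm$.
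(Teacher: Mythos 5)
Your proposal is correct and follows essentially the same route as the paper: the forward direction via the partition of unity among the four size-$3$ $A$-flags together with $P_4$, $P_4'$ and Theorem~\ref{thm:chartr3A}, and the converse by extracting $\phi(\vec C_3)=1/4$, squaring $\vec C_3^A$ to control a size-$4$ density, and closing with linear relations among the size-$4$ densities. The computation you defer does come out as you hope --- both extra vertices of two cyclic triangles sharing the labelled arc are beaten by vertex $2$ and beat vertex $1$, so only $R_4$ arises and $\llb (\vec C_3^A)^2 \rrb_A = R_4/12$, giving $\phi(R_4)=6\expect{\bm{\phi^A}(\vec C_3^A)^2}=3/8$ \emph{exactly} (the hypothesis makes $\bm{\phi^A}(\vec C_3^A)$ a.s.\ constant, so no variance or Jensen step is needed); combined with $\vec C_3=\tfrac12 R_4+\tfrac14 W_4+\tfrac14 L_4$ this yields $\phi(W_4+L_4)=1/4$, hence $\phi(\Tr_4+R_4)=3/4$, which is $P_2$, so no supplementary identities and no appeal to $\phi(\Tr_4)\geq 3/8$ are required.
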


\begin{proof}
In one direction it is simple.
  By Chung and Graham's characterizations~$P_4$ and~$P_4'$ (that is, after reverting the arcs) and by Theorem~\ref{thm:chartr3A}, we have
  \begin{align*}
    \bm{\phirdm^A}(O^A) & = \bm{\phirdm^A}(I^A) = \bm{\phirdm^A}(\Tr_3^A)
    = \frac{1}{4} \as
  \end{align*}

  Hence, since~$\vec C_3^A = \One_A - O^A - I^A - \Tr_3^A$, we have
  \begin{align*}
    \bm{\phirdm^A}(\vec C_3^A) = \frac{1}{4} \as
  \end{align*}

  \medskip

  For the converse, note first that if~$\phi\in\Hom^+(\cA^0,\RR)$ is such that~$\bm{\phi^A}(\vec C_3^A) = 1/4$ a.s., then
  \begin{align*}
    \phi(\vec C_3) & =
    \frac{\phi(\llbracket\vec C_3^A\rrbracket_A)}{\phi(\llbracket\One_A\rrbracket_A)}
    = \expect{\bm{\phi^A}(\vec C_3^A)}
    = \frac{1}{4}.
  \end{align*}

  Now, it is straightforward to check the following flag algebra identities.
  \begin{align*}
    R_4 & = 12\llbracket (\vec C_3^A)^2 \rrbracket_A; &
    \vec C_3 & = \frac{1}{2}R_4 + \frac{1}{4}W_4 + \frac{1}{4}L_4.
  \end{align*}

  Using the first identity along with~$\bm{\phi^A}(\vec C_3^A)=1/4$ a.s., we get
  \begin{align*}
    \phi(R_4) & = 12\phi(\llbracket (\vec C_3^A)^2 \rrbracket_A)
    = 6\expect{\bm{\phi^A}(\vec C_3^A)^2}
    = \frac{3}{8}.
  \end{align*}

  Using the second identity, we get
  \begin{align*}
    \phi(W_4 + L_4) & = 4\phi(\vec C_3) - 2\phi(R_4)
    = \frac{1}{4},
  \end{align*}
  hence~$\phi(\Tr_4+R_4) = 3/4$ (since~$\Tr_4+R_4+W_4+L_4=\One$), which is property~$P_2$ of Chung--Graham.

  Therefore~$\phi=\phirdm$.
\end{proof}

\bibliographystyle{alpha}
\bibliography{razb}
\end{document}